\newcommand{\rom}[1]{%
  \textup{\uppercase\expandafter{\romannumeral#1}}%
}
\declaretheorem{claim}
\declaretheorem{theorem}
\declaretheorem{corollary}
\newtheorem*{theorem*}{Theorem}
\newcommand{\bz}{{\pmb z}}
\newcommand{\bw}{{\pmb w}}
\newcommand{\bbw}{\bar{{\pmb w}}}
\newcommand{\hbw}{\widehat{{\pmb w}}}
\newcommand{\Adag}{A^{\dagger}}
\newcommand{\smin}{\sigma_{\mathsf{min}}}
\newcommand{\smax}{\sigma_{\mathsf{max}}}
\newcommand{\Rng}{\mathsf{Range}}
\newcommand{\Null}{\mathsf{Null}}
\newcommand{\bv}{{\pmb v}}
\newcommand{\mineig}{{\lambda_{\mathsf min}}}
\newcommand{\maxeig}{{\lambda_{\mathsf max}}}
\let\oldphi\phi
\renewcommand\phi{\operatorname{\oldphi}}
\newcommand{\tL}{\tilde{\mathcal{L}}}
\newcommand{\tell}{\tilde{\ell}}
\newcommand\defeq{\triangleq}
\DeclareMathAlphabet{\mathpzc}{OT1}{pzc}{m}{it}
\theoremstyle{definition}
\newtheorem{lem}{Lemma}[section]
\newtheorem{defn}[lem]{Definition}
\newtheorem{assumption}{Assumption}
\newcommand{\cL}{\mathcal{L}}
\newcommand{\ex}[2]{\underset{#1}{\mathbb{E}}\left[ #2 \right]}
\newcommand{\re}{\mathbb{R}}
\newcommand{\cH}{\mathcal{H}}
\newcommand{\surround}[2][r]%
  {\ifstrequal{#1}{round}%
    {\left( #2 \right)}%
    {\ifstrequal{#1}{square}%
      {\left[ #2 \right]}%
      {\ifstrequal{#1}{curly}%
        {\left\{ #2 \right\}}%
        {\ifstrequal{#1}{angle}%
          {\left\langle #2 \right\rangle}%
          {\ifstrequal{#1}{|}%
            {\left\lvert #2 \right\rvert}%
            {\ifstrequal{#1}{||}%
              {\left\lVert #2 \right\rVert}%
              {\ifstrequal{#1}{floor}%
                {\left\lfloor #2 \right\rfloor}%
                {\ifstrequal{#1}{ceil}%
                  {\left\lceil #2 \right\rceil}%
                  {\ifstrequal{#1}{.}%
                    {\left. #2 \right.}%
                    {\left( #2 \right)}%
                  }%
                }%
              }%
            }%
          }%
        }%
      }%
    }%
  }
\title{On exponential convergence of SGD in non-convex over-parametrized learning}
 \author[]{Raef Bassily}
 \author[]{Mikhail Belkin}
 \author[]{Siyuan Ma}
 \affil{Department of Computer Science and Engineering}
 \affil{The Ohio State University}
 \affil{
\textit{ \textit{bassily.1@osu.edu},
\{mbelkin, masi\}@cse.ohio-state.edu}
 }
\begin{document}

\maketitle

\begin{abstract}

Large over-parametrized models learned via stochastic gradient descent (SGD) methods have become a  key element in modern machine learning. 
Although SGD methods are very effective in practice, most theoretical analyses of SGD suggest slower 
convergence than what is empirically observed. In our recent work~\cite{MBB17} we analyzed how interpolation, common in modern over-parametrized learning, results in exponential convergence of SGD with constant step size for convex loss functions. In this note, we extend those results to  a much broader non-convex function class satisfying the Polyak-Lojasiewicz (PL) condition. A number of important non-convex problems in machine learning, including some classes of neural networks, have been recently shown to satisfy the PL condition. 
We argue that the PL condition provides a relevant and attractive setting for many machine learning problems, particularly in the over-parametrized regime.

\end{abstract}


\section{Introduction}

 Stochastic Gradient Descent and its variants have become a staple of the algorithmic  foundations of  machine learning.  Yet many of its properties are not  fully understood, particularly in non-convex settings common in modern practice.  

In this note, we study convergence of Stochastic Gradient Descent (SGD) for the class of functions satisfying the Polyak-Lojasiewicz (PL) condition. This class contains all strongly-convex functions as well as a broad range of non-convex functions including those used in machine learning applications (see the discussion below). 

The primary purpose of this note is to show that in the interpolation setting (common in modern over-parametrized machine learning and studied in our previous work~\cite{MBB17}) SGD with fixed step size has exponential convergence for the  functions satisfying the PL condition. To the best of our knowledge, this is the first such exponential convergence result for a class of non-convex functions. 



Below, we discuss and highlight a number of aspects of the PL condition which differentiate it from the convex setting and make it more relevant to the practice and requirements of many machine learning problems.
We first recall that in the interpolation setting, a minimizer $\bw^*$ of the empirical loss $\cL(\bw)=\frac{1}{n}\sum_{i=1}\ell_i(\bw)$ satisfies that $\ell_i(\bw^*)=0$ for all $i$. We say that $\cL$ satisfies the PL condition (see~\cite{KNS18}) if $\|\nabla \cL(\bw)\|^2\geq \alpha \cL(\bw)$  for some $\alpha>0$.


Most analyses for optimization in machine learning have concentrated on convex or, commonly,  strongly convex setting. These settings are amenable to theoretical analyses and describe many important special cases of ML, such as linear and kernel methods. Still, a large class of modern models, notably neural networks, are non-convex. Even for kernel machines, many of the arising optimization problems are poorly conditioned and not well-described by the traditional strongly convex analysis. Below we list  some properties of the PL-type setting which make it particularly attractive and relevant to the requirements of machine learning, especially in the interpolated and over-parametrized setting.  
\begin{itemize}
    \item[\it Ease of verification.] To verify the PL condition in the interpolated setting we need access to the norm of the gradient $\|\nabla \cL(\bw)\|$ and the value of the objective function $\cL(\bw)$. These quantities are typically easily accessible empirically\footnote{In general we need to evaluate $\cL(\bw)- \cL(\bw^*)$. Since $\cL(\bw^*)=0$, no further knowledge about $\bw^*$ is required.}, can be accurately estimated from a sub-sample of the data, and are often tractable analytically. 
    On the other hand, verifying convexity requires the  cumbersome  positive definiteness of the Hessian matrix requiring accurate estimation of its {\it smallest} eigenvalue $\lambda_\mathsf{min}$. Verifying this empirically is often difficult and cannot always be based on a sub-sample due to the required precision of the estimator when $\lambda_\mathsf{min}$ is close to zero (as is frequently the case in practice). 
    
    \item[\it Robustness of the condition.] The norm of the gradient is much more resilient to  perturbation of the objective function than the smallest eigenvalue of the Hessian (for convexity). 
    
    \item[\it Admissibility of multiple global minima.] Many modern machine learning methods are over-parametrized and result in manifolds of global minima~\cite{cooper2018loss}. This is not compatible with strict convexity and, in most circumstances\footnote{Unless those manifolds are convex domains in lower-dimensional affine sub-spaces.}, not compatible with convexity. However, manifolds of solutions are compatible with the PL condition. 
     
    \item[\it Invariance under transformations.] Nearly every application of machine learning  employs techniques for feature extraction or feature transformation. Global minima and the property of interpolation (shared global minima for the individual loss functions) are preserved under coordinate transformations. Yet convexity is generally not, thus not allowing for a unified analysis of optimization under feature transforms.
    In contrast, as discussed in Section~\ref{sec:transf-inv}, the PL condition is invariant under a broad class of non-linear coordinate transformations.
    
    \item[\it PL on manifolds.] Many problems of interest in machine learning involve optimization on manifolds. While geodesic convexity allows for efficient optimization, it is a parametrization dependent notion and is generally difficult to establish, as it requires explicit knowledge of the geodesic coordinates on the manifold. In contrast, the PL condition also allows for efficient optimization, while invariant under the choice of coordinates and far easier to verify. See~\cite{weber2017frank} for some recent applications.
    

    \item[\it Convergence analysis independent of the distance to the minimizer.] Most  convergence analyses in convex optimization rely on the distance to the minimizer. Yet, this distance is often difficult or impossible to  bound empirically. Furthermore, the distance to minimizer can be infinite in many important settings, including optimization via logistic loss~\cite{soudry2017implicit} or inverse problems over Hilbert spaces, as in kernel methods~\cite{ma2017diving}. In contrast, PL-type analyses directly involve {\it the value} of the loss function, an empirically observable quantity of practical significance. 
    
    \item[\it Exponential convergence of GD and SGD.] As originally observed by Polyak~\cite{polyak1963gradient}, the PL condition is sufficient for exponential convergence of gradient descent.  As we establish in this note, it also allows for exponential convergence of stochastic gradient descent with fixed step size in the interpolated setting. 
\end{itemize}

\noindent{\bf Technical contributions:} 
The main technical contribution of this note is to show the exponential convergence of mini-batch SGD in the interpolated setting. The proof is simple and is reminiscent of the original observation by Polyak~\cite{polyak1963gradient} of exponential convergence of gradient descent. It also extends our previous work on the exponential convergence of mini-batch SGD~\cite{MBB17} to a non-convex setting. Interestingly, the step size arising from the PL condition in our analysis depends on the parameter $\alpha$ and is potentially much smaller  than that in the strongly convex case, where no such dependence is needed. At this point it is an open question whether this dependence is necessary in the PL setting. As an additional contribution, in Section~\ref{sec:conv}, we show that for a special class of PL functions obtained by a composition of a strictly convex function and a linear transformation\footnote{These functions are convex but not necessarily strictly convex.}, we obtain exponential convergence without such dependence on $\alpha$ in the step size. However, this result requires a different type of analysis than that for the general PL setting. 
In Section~\ref{sec:transf-inv}, we provide a formal statement capturing the transformation invariance property of the PL condition.  

\paragraph{Examples and Related Work:}
The PL condition has recently become popular in optimization and machine learning starting with the work~\cite{KNS18}.
In fact, as discussed in~\cite{KNS18}, several other conditions proposed for convergence analysis are special cases of the PL condition. One such condition is Restricted secant inequality (RSI) proposed in \cite{zhang2013gradient}. 
Another set of conditions that are special cases of the PL condition was referred to as ``one-point convexity'' in \cite{allen2017natasha}. 
The two variations of one-point convexity discussed there are special cases of RSI and PL, respectively, and hence are in the PL class. The same reference points out several examples of ``one-point convexity'' in previous works. 
Some notable examples satisfying RSI include two-layer neural networks~\cite{li2017convergence}, matrix completion~\cite{sun2016guaranteed}, dictionary learning~\cite{arora2015simple}, and phase retrieval~\cite{chen2015solving}. It has also been observed empirically that neural networks satisfy the PL condition~\cite{kleinberg2018alternative}.
In particular, we note the recent work~\cite{soltanolkotabi2018theoretical} which considers a class of neural networks that attain  zero quadratic loss implying interpolation. In their proof it is shown that this class of neural nets satisfies the PL condition. Hence our results imply exponential convergence of SGD for this class. To the best of our knowledge this is the first time that exponential convergence of SGD has been established for a class of multi-layer neural networks.

\section{Exponential Convergence of SGD for PL Losses}

We start by formally stating the Polyak- Lojasiewicz (PL) Condition. 

\begin{defn}[$\alpha$-PL function]\label{defn:PL}
Let $\alpha >0$. Let $f:\cH\rightarrow \re$ be a differentiable function. Assume, w.o.l.g., that $ \inf\limits_{v\in\cH}f(v)=0$.  We say that $f$ is $\alpha$-PL if for every $w\in \cH$, we have 
$$\|\nabla f(w)\|^2\geq \alpha f(w).$$
\end{defn}

\paragraph{ERM with smooth losses:} We consider the ERM problem where for all $1\leq i\leq n$, $\ell_i$ is $\beta$-smooth. Moreover, $\cL(\bw)= \frac{1}{n}\sum_{i=1}^n\ell_i(\bw)$ is 
$\lambda$-smooth, 
$\alpha$-PL function (as in Definition~\ref{defn:PL} above).

We do not assume compact parameter space; that is, a parameter vector $\bw\in\cH$ can have unbounded norm, however $\cL$ is assumed to be bounded. In particular, a global minimizer may not exist, however, we assume the existence of global infimum for $\cL$ (which is equal to zero w.o.l.g.). 


To elaborate, we assume the existence of a sequence $\bw_1, \bw_2, \ldots$ such that 
\begin{align}
\lim\limits_{k\rightarrow\infty}\cL(\bw_k)=\inf\limits_{\bw\in\cH}\cL(\bw)=0\label{eq:L_min}
\end{align}



\begin{assumption}[Interpolation]\label{assump:interp} 
For every sequence $\bw_1, \bw_2, \ldots$ such that $\lim\limits_{k\rightarrow \infty}\cL(\bw_k)=0$, we have for all $1\leq i\leq n$, $\lim\limits_{k\rightarrow\infty}\ell_i(\bw_k)=0$. 
\end{assumption}

Consider the SGD algorithm that starts at an arbitrary $\bw_0\in\cH$, and at each iteration $t$ makes an update with a constant step size $\eta$:
\begin{align}
\bw_{t+1}&=\bw_t -\eta \cdot \nabla \left\{ \frac{1}{m}\sum_{j=1}^m  \ell_{i_t^{(j)}}(\bw_t)
\right\}
\label{main-update-step}
\end{align}
where $m$ is the size of a mini-batch of data points whose indices $\{i_t^{(1)}, \ldots, i_t^{(m)}\}$ are drawn uniformly with replacement at each iteration $t$ from $\{1, \ldots, n\}$.

The theorem below establishes the exponential convergence of mini-batch SGD for any smooth, PL loss $\cL$ in the interpolated regime. 

\begin{theorem}\label{thm:PL-SGD}
Consider the mini-batch SGD with smooth losses as described above. Suppose that Assumption~\ref{assump:interp} holds and suppose that the empirical risk function $\cL$ is $\alpha$-PL for some fixed $\alpha >0$. For any mini-batch size $m \in \mathbb{N}$, the mini-batch SGD~(\ref{main-update-step}) with \emph{constant} step size
$\eta^*(m) \defeq \frac{\alpha m}{\lambda\left(\beta + \lambda (m - 1)\right)}$
gives the following guarantee
\begin{equation}\label{eq:theorem1}
\ex{\bw_t}{\cL(\bw_t)}\leq
\left(1 - \frac{\alpha\,\eta^*(m)}{2}\right)^t\,\cL(\bw_0)
\end{equation}
where the expectation is taken w.r.t. the randomness in the choice of the mini-batch.
\end{theorem}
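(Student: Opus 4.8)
The plan is to establish a one-step contraction of the form $\mathbb{E}\!\left[\cL(\bw_{t+1}) \mid \bw_t\right] \le \rho\,\cL(\bw_t)$ with $\rho = 1 - \tfrac{\alpha\,\eta^*(m)}{2} \in (0,1)$, and then iterate it using the tower property of conditional expectation to obtain the stated geometric decay. The starting point is the $\lambda$-smoothness descent inequality applied to the update~(\ref{main-update-step}): writing $\bg_t \defeq \nabla\{\tfrac1m\sum_{j=1}^m \ell_{i_t^{(j)}}(\bw_t)\}$ for the mini-batch gradient, smoothness gives $\cL(\bw_{t+1}) \le \cL(\bw_t) - \eta\,\langle \nabla\cL(\bw_t), \bg_t\rangle + \tfrac{\lambda\eta^2}{2}\|\bg_t\|^2$. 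Taking expectation conditioned on $\bw_t$ and using that the indices are drawn uniformly, so that $\mathbb{E}[\bg_t \mid \bw_t] = \nabla\cL(\bw_t)$, the cross term collapses to $-\eta\,\|\nabla\cL(\bw_t)\|^2$, and the whole analysis reduces to controlling the second moment $\mathbb{E}[\|\bg_t\|^2 \mid \bw_t]$.

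First I would compute this second moment exactly. Since the $m$ indices are i.i.d. uniform (with replacement), the standard variance decomposition gives $\mathbb{E}[\|\bg_t\|^2 \mid \bw_t] = \tfrac1m\,\tfrac1n\sum_{i=1}^n \|\nabla\ell_i(\bw_t)\|^2 + \tfrac{m-1}{m}\,\|\nabla\cL(\bw_t)\|^2$, where the first term is the single-draw second moment and the second arises from the $m(m-1)$ cross terms, each equal to $\|\nabla\cL(\bw_t)\|^2$ by independence and unbiasedness.

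The crux of the argument — and the place where interpolation enters — is bounding both pieces of the second moment by the objective value itself. Assumption~\ref{assump:interp} together with $\inf\cL = 0$ forces $\inf\ell_i = 0$ for every $i$, so each $\beta$-smooth, nonnegative $\ell_i$ obeys the self-bounding inequality $\|\nabla\ell_i(\bw)\|^2 \le 2\beta\,\ell_i(\bw)$ (obtained by minimizing the smoothness upper bound of $\ell_i$ at $\bw - \tfrac1\beta\nabla\ell_i(\bw)$ and using $\ell_i \ge 0$); averaging over $i$ yields $\tfrac1n\sum_i\|\nabla\ell_i(\bw_t)\|^2 \le 2\beta\,\cL(\bw_t)$. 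The same reasoning applied to the $\lambda$-smooth $\cL$ gives $\|\nabla\cL(\bw_t)\|^2 \le 2\lambda\,\cL(\bw_t)$. Substituting both into the second-moment identity produces $\mathbb{E}[\|\bg_t\|^2 \mid \bw_t] \le \tfrac{2(\beta + \lambda(m-1))}{m}\,\cL(\bw_t)$. This is precisely the mechanism by which interpolation removes the usual additive noise floor: the gradient variance is controlled by $\cL(\bw_t)$, which itself tends to zero, so a fixed step size suffices for exponential convergence. I expect this step to be the main obstacle, since it is where both the mini-batch structure and, crucially, the interpolation hypothesis must be combined correctly; it is exactly what separates this analysis from generic SGD bounds that stall at a nonzero noise level.

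Finally I would assemble the pieces. Inserting the second-moment bound into the descent inequality and invoking the PL condition $\|\nabla\cL(\bw_t)\|^2 \ge \alpha\,\cL(\bw_t)$ on the remaining cross term gives $\mathbb{E}[\cL(\bw_{t+1}) \mid \bw_t] \le \bigl(1 - \alpha\eta + \tfrac{\lambda(\beta+\lambda(m-1))}{m}\,\eta^2\bigr)\cL(\bw_t)$. The per-step factor is a convex quadratic in $\eta$; its minimizer $\eta^*(m)$ balances the linear and quadratic terms and makes the factor equal to $1 - \tfrac{\alpha\,\eta^*(m)}{2}$, which lies strictly in $(0,1)$. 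Taking total expectations and iterating this one-step bound from $t$ down to $0$ (with $\bw_0$ deterministic) then yields~(\ref{eq:theorem1}). A minor point to verify alongside is that $\eta^*(m)$ keeps the coefficient of $\|\nabla\cL(\bw_t)\|^2$ of the correct sign, so that the PL inequality is applied in the intended direction.
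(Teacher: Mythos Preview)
Your overall scaffold (descent lemma, variance decomposition, self-bounding via smoothness, then PL) matches the paper, but there is one over-eager step that breaks the argument for the \emph{stated} step size. After decomposing $\mathbb{E}[\|\bg_t\|^2\mid\bw_t]=\tfrac1m\cdot\tfrac1n\sum_i\|\nabla\ell_i(\bw_t)\|^2+\tfrac{m-1}{m}\|\nabla\cL(\bw_t)\|^2$, you additionally invoke $\|\nabla\cL(\bw_t)\|^2\le 2\lambda\,\cL(\bw_t)$ on the second piece. That extra upper bound is exactly what costs you the theorem: your resulting per-step factor is $q(\eta)=1-\alpha\eta+\tfrac{\lambda(\beta+\lambda(m-1))}{m}\eta^2$, whose minimizer is $\eta^*(m)/2$, not $\eta^*(m)$; and plugging in the theorem's $\eta^*(m)=\tfrac{\alpha m}{\lambda(\beta+\lambda(m-1))}$ gives $q(\eta^*(m))=1$ on the nose, so no contraction at all. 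Thus your route only yields the weaker conclusion with step size $\eta^*(m)/2$ and factor $1-\tfrac{\alpha\eta^*(m)}{4}$.

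The paper avoids this loss by \emph{not} bounding the $\tfrac{m-1}{m}\|\nabla\cL(\bw_t)\|^2$ piece by $\cL(\bw_t)$. Instead it keeps that piece, groups it with the first-order term to obtain a coefficient $\eta\bigl(1-\tfrac{\eta\lambda(m-1)}{2m}\bigr)$ in front of $\|\nabla\cL(\bw_t)\|^2$, checks this coefficient is nonnegative (this is precisely the ``sign'' check you allude to at the end, and it is where $\eta\le 2/\lambda$ enters), and only then applies the PL lower bound $\|\nabla\cL(\bw_t)\|^2\ge\alpha\,\cL(\bw_t)$. This yields the tighter quadratic $1-\alpha\eta+\tfrac{\lambda}{m}\bigl(\alpha\tfrac{m-1}{2}+\beta\bigr)\eta^2$, from which the paper derives the claimed rate at $\eta^*(m)$. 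In short: apply self-bounding only to the individual-loss term $\tfrac1n\sum_i\|\nabla\ell_i\|^2\le 2\beta\,\cL$; leave $\|\nabla\cL\|^2$ alone and let PL handle it after combining with the linear term.
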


\begin{proof}
From $\lambda$-smoothness of $\cL$, it follows that
$$\cL(\bw_{t+1})\leq \cL(\bw_{t})+\langle \nabla\cL(\bw_t),~ \bw_{t+1}-\bw_t\rangle+\frac{\lambda}{2}\|\bw_{t+1}-\bw_{t}\|^2.$$
Using \ref{main-update-step}, we then have
$$\cL(\bw_{t})-\cL(\bw_{t+1})\geq \eta \left\langle~ \nabla\cL(\bw_t)~,~ \frac{1}{m}\sum_{j=1}^m  \nabla \ell_{i_t^{(j)}}(\bw_t)
~\right\rangle -\frac{\eta^2\lambda}{2}\left\| \frac{1}{m}\sum_{j=1}^m  \nabla \ell_{i_t^{(j)}}(\bw_t)\right\|^2.$$ 

Fixing $\bw_t$ and taking expectation with respect to the randomness in the choice of the batch $i_t^{(1)}, \ldots, i_t^{(m)}$ (and using the fact that those indices are i.i.d.), we get

$$\ex{i_t^{(1)},\ldots, i_t^{(m)}}{\cL(\bw_{t})-\cL(\bw_{t+1})}\geq \eta \left\|\nabla \cL(\bw_t)\right\|^2 - \eta^2 \frac{\lambda}{2}\left(\frac{1}{m}~\ex{i_t^{(1)}}{\|\nabla \ell_{i_t^{(1)}}(\bw_t)\|^2}+\frac{m-1}{m}\|\nabla \cL(\bw_t)\|^2\right)$$
Since $\forall i\in [n], \ell_i$ is $\beta$-smooth and non-negative, we have $\|\nabla \ell_{i_t^{(1)}}(\bw_t)\|^2 \leq 2\beta \ell_{i_t^{(1)}}(\bw_t)$ with probability $1$ over the choice of $i_t^{(1)}$. 
Thus, the last inequality reduces to
$$\ex{}{\cL(\bw_{t})-\cL(\bw_{t+1})}\geq \eta\,\left(1-\frac{\eta\lambda}{2}\frac{m-1}{m}\right) \|\nabla \cL(\bw_t)\|^2 - \eta^2 \frac{\lambda\beta}{m}\cL(\bw_t).$$

By invoking $\alpha$-PL condition of $\cL$ and assuming that $\eta \leq \frac{2}{\lambda}$, we get
\begin{align*}
\ex{}{\cL(\bw_{t})-\cL(\bw_{t+1})}&\geq \alpha\, \eta\,\left(1-\frac{\eta\lambda}{2}\frac{m-1}{m}\right)\cL(\bw_t)- \eta^2 \frac{\lambda\beta}{m}\cL(\bw_t)\\
&=\eta \left(\alpha - \eta\frac{\lambda}{m}\left(\alpha\frac{m-1}{2}+\beta\right)\right)\cL(\bw_t)
\end{align*}
Hence,
\begin{align}
\ex{}{\cL(\bw_{t+1})}&\leq \left(1-\eta\,\alpha + \eta^2\frac{\lambda}{m}\left(\alpha\frac{m-1}{2}+\beta\right)\right)\ex{}{\cL(\bw_t)}\label{eq:final_bound_PL-SGD}
\end{align}
By optimizing the quadratic term in the upper bound~\ref{eq:final_bound_PL-SGD} with respect to $\eta$, we get $\eta = \frac{\alpha m}{\lambda\left(\beta + \lambda (m - 1)\right)}$, which is $\eta^*(m)$ in the theorem statement. Hence, (\ref{eq:final_bound_PL-SGD}) becomes 
$$\ex{}{\cL(\bw_{t+1})}\leq
\left(1 - \frac{\alpha\,\eta^*(m)}{2}\right)\,\ex{}{\cL(\bw_t)},$$
which gives the desired convergence rate.
\end{proof}

\section{A Transformation-Invariance Property of PL Functions and Its Implications}\label{sec:transf-inv}

In this section, we formally discuss a simple observation concerning the class of PL functions that has useful implications on wide array of problems in modern machine learning. In particular, we observe that if $f: \cH\rightarrow \re$ is $\lambda$-smooth and $\alpha$-PL function for some $\lambda, \alpha>0$, then for any map $\Phi:\cH' \rightarrow \cH$ that satisfies certain weak conditions, the composition  $f\left(\Phi\left(\cdot\right)\right): \cH' \rightarrow \re$ is $\lambda'$-smooth and $\alpha'$-PL for some $\lambda', \alpha'>0$ that depend on $\lambda, \alpha$, respectively, as well as a fairly general property of $\Phi$. This shows that the class of smooth PL objectives is closed under a fairly large family of transformations. Given our results above, this observation has direct implications on the convergence of SGD for large class of problems that involve parameter transformation, e.g., via feature maps. 

First, we formalize this closure property in the following claim. Let $\Phi:\re^k\rightarrow\re^d$ be any map. We can write such a map as $\Phi=\left(\phi_1, \ldots, \phi_d\right)$, where for each $j\in [d],$ $\phi_j: \re^k \rightarrow \re$ is a scalar function over $\re^k$. The Jacobian of $\Phi$ is an operator $J_{\Phi}: \re^k \rightarrow \re^d$ that, for each $\bw=(w_1, \ldots, w_k)\in \re^k$, is described by a $d\times k$ real-valued matrix $J_{\Phi}(\bw)$ whose entries are the partial derivatives $\frac{\partial \phi_j}{\partial w_p},~ 1\leq j\leq d, ~1\leq p\leq k.$ 

\begin{claim}\label{claim:trans_inv}
Let $f: \re^d\rightarrow \re$ be $\lambda$-smooth and $\alpha$-PL function for some $\lambda, \alpha>0$. Let $\Phi: \re^k \rightarrow \re^d$ be any map, where $d\geq k$.  Suppose there exist $b \geq a >0$ such that for all $\bw\in \re^k,~$ $\mineig\left(J_{\Phi}(\bw)^T\,J_{\Phi}(\bw)\right)\geq a$ and $\maxeig\left(J_{\Phi}(\bw)^T\,J_{\Phi}(\bw)\right)\leq b $, where $\mineig\left(J_{\Phi}(\bw)^T\,J_{\Phi}(\bw)\right)$ and $\maxeig\left(J_{\Phi}(\bw)^T\,J_{\Phi}(\bw)\right)$ denote the minimum and maximum eigen values of $J_{\Phi}(\bw)^T\,J_{\Phi}(\bw)$, respectively. Then, the function $f\left(\Phi\left(\cdot\right)\right): \re^k \rightarrow \re$ is $\lambda'$-smooth and $\alpha'$-PL, where $\alpha' = a\, \alpha$ and $\lambda'= b\, \lambda$. 
\end{claim}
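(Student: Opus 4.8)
The plan is to differentiate the composition $g \defeq f\circ\Phi$ and then sandwich the resulting gradient (and Hessian) using the spectral bounds on $J_\Phi(\bw)^T J_\Phi(\bw)$. By the chain rule, $\nabla g(\bw) = J_\Phi(\bw)^T\,\nabla f(\Phi(\bw))$, and since $f\geq 0$ everywhere we have $g\geq 0$, so it suffices to verify pointwise the quadratic upper bound defining smoothness and the inequality defining the PL property for $g$. Throughout I would abbreviate $J \defeq J_\Phi(\bw)$ and $u \defeq \nabla f(\Phi(\bw))\in\re^d$.

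For the PL lower bound I would write $\norm{\nabla g(\bw)}^2 = \norm{J^T u}^2 = u^T J J^T u$ and attempt to lower-bound this by $a\,\norm{u}^2$. Combined with the $\alpha$-PL property of $f$ applied at the point $\Phi(\bw)$, namely $\norm{u}^2 \geq \alpha\, f(\Phi(\bw)) = \alpha\, g(\bw)$, this would yield $\norm{\nabla g(\bw)}^2 \geq a\alpha\, g(\bw) = \alpha'\, g(\bw)$, which is exactly the required $\alpha'$-PL inequality with $\alpha' = a\alpha$.

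For the smoothness upper bound I would pass to the Hessian, $\nabla^2 g(\bw) = J^T\,\nabla^2 f(\Phi(\bw))\,J \;+\; \sum_{j=1}^d \partial_j f(\Phi(\bw))\,\nabla^2\phi_j(\bw)$. The first term is controlled directly by the hypotheses: its operator norm is at most $\maxeig(J^T J)\cdot\norm{\nabla^2 f} \leq b\lambda$, which is precisely $\lambda' = b\lambda$. A bound $\nabla^2 g \preceq \lambda' I$ then gives the quadratic (descent-lemma) upper bound for $g$ in the same form used in Theorem~\ref{thm:PL-SGD}.

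The step I expect to be the main obstacle is the PL lower bound $u^T J J^T u \geq a\,\norm{u}^2$. This is immediate when $J$ has full row rank (as when $d = k$), since then $JJ^T$ and $J^T J$ share their nonzero spectrum and $JJ^T \succeq \mineig(J^T J)\, I \succeq a\, I$. When $d > k$, however, $JJ^T$ is singular, so $J^T$ retains only the component of $u = \nabla f(\Phi(\bw))$ lying in the range of $J$ (the tangent space of the image of $\Phi$), and the contribution of the normal component of $\nabla f$ is lost; justifying that this normal part cannot obstruct the inequality is the crux of the claim. An analogous difficulty appears in the smoothness argument: for genuinely nonlinear $\Phi$ the Hessian correction $\sum_j \partial_j f\,\nabla^2\phi_j$ is not controlled by the stated hypotheses and vanishes exactly when $\Phi$ is affine, so handling it in general would require an additional bound on the second derivatives of $\Phi$.
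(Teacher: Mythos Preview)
Your chain-rule approach is precisely what the paper intends; its entire proof of this claim is the one sentence ``The above claim follows easily from the chain rule and the PL condition.'' You have gone further than the paper and correctly diagnosed two genuine defects in the \emph{statement} itself, not merely obstacles in your own argument.

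For the PL half, the inequality $u^T JJ^T u \geq a\,\|u\|^2$ indeed fails when $d>k$. Take $f(x,y)=(x+y)^2$, which is $8$-PL since $\|\nabla f\|^2=8f$, and $\Phi(w)=(w,0)$, so that $J^TJ=1$ and $a=1$. Then $g(w)=w^2$ satisfies $|g'(w)|^2=4g(w)$ and is only $4$-PL, strictly weaker than the asserted $\alpha'=a\alpha=8$. The component of $\nabla f$ normal to $\Rng(J)$ really is lost, and nothing in the hypotheses prevents this; the bound $\alpha'=a\alpha$ is correct only when $d=k$ or under an additional assumption forcing $\nabla f(\Phi(\bw))\in\Rng(J_\Phi(\bw))$. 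For the smoothness half, your observation about the Hessian correction $\sum_j \partial_j f(\Phi(\bw))\,\nabla^2\phi_j(\bw)$ is equally valid: with $f(x)=x^2$ (which is $2$-smooth and $4$-PL) and $\Phi(w)=w+\tfrac{1}{20}\sin(10w)$ one has $\Phi'\in[\tfrac12,\tfrac32]$, so the eigenvalue hypotheses hold, yet $g''(w)=2(\Phi'(w))^2+2\Phi(w)\Phi''(w)$ is unbounded and no finite $\lambda'$ works. The bound $\lambda'=b\lambda$ holds for affine $\Phi$ but not in general. The paper's one-line justification glosses over both points; your analysis is sharper than what the paper provides.
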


Note that the condition that $d\geq k$ is necessary for $\mineig\left(J_{\Phi}(\bw)^T\,J_{\Phi}(\bw)\right)$ to be positive. The condition on $\maxeig\left(J_{\Phi}^T(\bw)\,J_{\Phi}(\bw)\right)$ holds when $\Phi$ is differentiable and Lipschitz-continuous.  The above claim follows easily from the chain rule and the PL condition. 

Given this property of PL functions and our result in Theorem~\ref{thm:PL-SGD}, we can argue that for smooth, PL losses, the exponential convergence rate of SGD is preserved under any transformation that satisfies the conditions in the above claim. We formalize this conclusion below. 

As before, we consider a set of $\beta$-smooth losses $\ell_i:\re^d\rightarrow \re,~ 1\leq i\leq n$, where the empirical risk $\cL(\bw)=\frac{1}{n}\sum_{i=1}^n \ell_i(\bw)$ is $\lambda$-smooth and $\alpha$-PL. 

\begin{corollary}
Let $\Phi: \re^k\rightarrow \re^d$ be any map that satisfies the conditions in Claim~\ref{claim:trans_inv}. Suppose Assumption~\ref{assump:interp} holds and that there is sequence $\bw_1, \bw_2, \ldots \in \mathsf{Image}(\Phi)$ such that $\lim\limits_{j\rightarrow\infty} \cL(\bw_j)=\cL_{\mathsf{min}}=0$. Suppose we run mini-batch SGD w.r.t. the loss functions $\ell_i\left(\Phi\left(\cdot\right)\right), ~1\leq i\leq n,$ with batch size $m$ and step size $\eta_{\Phi}(m)=\frac{a}{b^2}\eta^*(m)$, where $\eta^*(m)$ is as defined in Theorem~\ref{thm:PL-SGD}. Let $\bv_0, \bv_1, \ldots, \bv_t \in \re^k$ denote the sequence of parameter vectors generated by mini-batch SGD over $t$ iterations. Then, we have 
$$\ex{\bv_t}{\cL\left(\Phi\left(\bv_t\right)\right)}\leq
\left(1 - \left(\frac{a^2}{b^2}\right)\frac{\alpha\,\eta^*(m)}{2}\right)^t\,\cL\left(\Phi(\bv_0)\right).$$
\end{corollary}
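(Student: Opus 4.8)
The plan is to reduce the corollary directly to Theorem~\ref{thm:PL-SGD} by recognizing that running SGD on the reparametrized losses $\ell_i\circ\Phi$ over $\re^k$ is itself an instance of the ERM setting analyzed there, but with a new triple of constants $(\alpha',\lambda',\beta')$. Concretely, I would introduce the composed problem with parameters $\bv\in\re^k$, individual losses $\tilde{\ell}_i\defeq\ell_i\circ\Phi$, and empirical risk $\tilde{\cL}\defeq\cL\circ\Phi=\frac{1}{n}\sum_{i=1}^n\tilde{\ell}_i$, and then verify that this problem satisfies every hypothesis of Theorem~\ref{thm:PL-SGD} with constants $(\alpha',\lambda',\beta')=(a\alpha,\,b\lambda,\,b\beta)$.

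First I would record the effective constants. Applying Claim~\ref{claim:trans_inv} with $f=\cL$ shows that $\tilde{\cL}$ is $\lambda'=b\lambda$-smooth and $\alpha'=a\alpha$-PL. For the individual losses I would observe that the smoothness half of Claim~\ref{claim:trans_inv} follows from the chain rule together with the bound $\maxeig(J_{\Phi}^T J_{\Phi})\leq b$, and does not require the composed function to be PL; hence its smoothness conclusion applies to each $\beta$-smooth $\ell_i$, giving that every $\tilde{\ell}_i$ is $\beta'=b\beta$-smooth. Since each $\ell_i\geq 0$, we also have $\tilde{\ell}_i\geq 0$, which is exactly what the proof of Theorem~\ref{thm:PL-SGD} needs to invoke the self-bounding inequality $\|\nabla\tilde{\ell}_i\|^2\leq 2\beta'\,\tilde{\ell}_i$.

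Next I would check the interpolation hypothesis for the reparametrized problem. Given any sequence $\bv_1,\bv_2,\ldots$ with $\tilde{\cL}(\bv_j)\to 0$, set $\bw_j=\Phi(\bv_j)$; then $\cL(\bw_j)\to 0$, and Assumption~\ref{assump:interp} yields $\ell_i(\bw_j)=\tilde{\ell}_i(\bv_j)\to 0$ for every $i$, which is precisely interpolation for the composed losses. The hypothesis that there is a minimizing sequence $\bw_j\in\mathsf{Image}(\Phi)$ with $\cL(\bw_j)\to 0$ guarantees $\inf_{\bv}\tilde{\cL}(\bv)=0$, so the PL normalization is consistent and $\tilde{\cL}_{\mathsf{min}}=0$; this is what legitimizes treating $\tilde{\cL}$ as an $\alpha'$-PL objective in the sense of Definition~\ref{defn:PL}.

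With all hypotheses in place, I would invoke Theorem~\ref{thm:PL-SGD} for the composed problem. Its optimal step size is $\frac{\alpha' m}{\lambda'(\beta'+\lambda'(m-1))}$, and substituting $(\alpha',\lambda',\beta')=(a\alpha,b\lambda,b\beta)$ makes the factors of $b$ collapse to exactly $\frac{a}{b^2}\eta^*(m)=\eta_{\Phi}(m)$. The theorem's rate $1-\frac{\alpha'}{2}\cdot\eta_{\Phi}(m)$ then becomes $1-\frac{a\alpha}{2}\cdot\frac{a}{b^2}\eta^*(m)=1-\frac{a^2}{b^2}\frac{\alpha\,\eta^*(m)}{2}$, the claimed contraction factor, while the initial value $\tilde{\cL}(\bv_0)=\cL(\Phi(\bv_0))$ gives the stated right-hand side. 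The main obstacle is not any single estimate but the bookkeeping of the reduction: one must be careful that the individual-loss smoothness $\beta'=b\beta$ is legitimately extracted from Claim~\ref{claim:trans_inv} even though the claim is phrased for PL functions, and that the interpolation and $\inf=0$ conditions genuinely transfer through $\Phi$; once these are settled, the step-size and rate identities are pure substitution.
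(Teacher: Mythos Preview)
Your proposal is correct and follows exactly the route the paper intends: the paper presents the corollary as an immediate consequence of combining Claim~\ref{claim:trans_inv} with Theorem~\ref{thm:PL-SGD} and does not give a separate proof, so your argument simply fills in the bookkeeping (transferring the constants to $(\alpha',\lambda',\beta')=(a\alpha,b\lambda,b\beta)$, checking interpolation and $\inf=0$ pass through $\Phi$, and substituting into the step size and rate). Your observation that the smoothness half of Claim~\ref{claim:trans_inv} applies to each $\ell_i$ without requiring it to be PL is exactly the small point one needs to justify $\beta'=b\beta$.
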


\section{Faster Convergence for a Class of Convex Losses}\label{sec:conv}

We consider a special class of PL functions originally discussed in \cite{KNS18}. This class contains all convex functions $f:\re^d\rightarrow \re$ that can be expressed as a composition $g\left(A(\cdot)\right)$ of a strongly convex function $g:\re^k\rightarrow \re$ with a linear function $A:\re^d\rightarrow\re^k$. Note that this class contains convex losses that are convex but not necessarily strongly, or even strictly convex. 

In \cite[Appendix B]{KNS18}, it was shown that if $g:\re^k\rightarrow \re$ is $\alpha$-strongly convex and $A\in \re^{k\times d}$ is matrix whose \emph{least non-zero} singular value is $\sigma$, then $f:\re^d\rightarrow \re$ defined as $f(\bw)\triangleq g\left(A\bw\right),~ \bw\in\re^d$ is $\alpha\, \sigma^2$-PL function. 
For this special class of PL losses, we show a better bound on the convergence rate than what is directly implied by Theorem~\ref{thm:PL-SGD}. The proof technique for this result is different from that of Theorem~\ref{thm:PL-SGD}. Exponential convergence of SGD for strongly convex losses in the interpolation setting has been established previously in \cite{MBB17}. In this section, we show a similar convergence rate for this larger class of convex losses.  

Let $A\in \re^{k\times d}$. Let $\sigma_{\mathsf{min}}$ and $\sigma_{\mathsf{max}}$ denote the smallest non-zero singular value and the largest singular value of $A$, respectively. Consider a collection of loss functions $\ell_i:\re^d\rightarrow\re, i=1, \ldots, n,$ where each $\ell_i$ can be expressed as $\ell_i(\bw)=\tell_i(A\bw)$ for some $\beta$-smooth convex function $\tell_i: \re^{k}\rightarrow\re$. It is easy to see that this implies that each $\ell_i$ is $\sigma_{\mathsf{max}}^2\beta$-smooth and convex. The empirical risk $\cL(\bw)=\frac{1}{n}\sum_{i=1}^n \ell_i(\bw)$ can be written as $\cL(\bw)=\tL(A\bw)\triangleq\frac{1}{n}\sum_{i=1}^n\tell_i(A\bw)$. Moreover, suppose that $\tL$ is $\lambda$-smooth and $\alpha$-strongly convex. Now, suppose we run SGD described in (\ref{main-update-step}) to solve the ERM problem defined by the losses $\ell_i, 1\leq i\leq n.$ The following theorem provides an exponential convergence guarantee for SGD in the interpolation setting. 

\begin{theorem}\label{thm:sgd-sc-lin}
Consider the scenario described above and suppose Assumption~\ref{assump:interp} is true. Let $\smin$ and $\smax$ be the smallest non-zero singular value and the largest singular value of $A$, respectively. Let $\bw^*\in\re^d$ be any vector such that $A\bw^*$ is the unique minimizer of $\tL$. The mini-batch SGD~(\ref{main-update-step}) with batch size $m$ and step size $\eta^*(m)=\frac{m}{\smax^2\left(\beta+(m-1)\lambda\right)}$ gives the following guarantee
\begin{align*}
    \ex{\bw_{t}}\cL(\bw_{t})&\leq \frac{\lambda\smax^2}{2}(1-\alpha\,\smin^2\,\eta^*(m))^t\|\hbw_0-\hbw^*\|
\end{align*}
where $\hbw_0=\Adag \bw_0$ ~and~ $\hbw^*=\Adag\bw^*$ where $\Adag$ is the pseudo-inverse of $A$.
\end{theorem}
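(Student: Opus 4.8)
The plan is to reduce the claim to a one-step contraction of a squared distance measured in the row space of $A$, and then convert that contraction into a bound on $\cL$ via smoothness. Let $\Pi = \Adag A$ be the orthogonal projection onto $\Rng(A^T)$ (the row space of $A$), and set $\hbw_t = \Pi\bw_t$ and $\hbw^* = \Pi\bw^*$ (the row-space components, consistent with the theorem's $\hbw_0,\hbw^*$ once one accounts for $A\bw\in\re^k$). Write $\bz_t = A\bw_t$ and $\bz^* = A\bw^*$, so that $A\hbw_t = \bz_t$, $A\hbw^* = \bz^*$, and $\cL(\bw_t) = \tL(\bz_t)$. First I would record the ``output'' estimate: since $\tL$ is $\lambda$-smooth with $\nabla\tL(\bz^*)=0$ and $\tL(\bz^*)=0$, we get $\cL(\bw_t)=\tL(\bz_t)\le \frac{\lambda}{2}\|\bz_t-\bz^*\|^2 \le \frac{\lambda\smax^2}{2}\|\hbw_t-\hbw^*\|^2$, using $\|A(\hbw_t-\hbw^*)\|\le\smax\|\hbw_t-\hbw^*\|$. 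It then suffices to prove $\ex{}{\|\hbw_t-\hbw^*\|^2}\le (1-\alpha\smin^2\eta^*(m))^t\|\hbw_0-\hbw^*\|^2$.

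Next I would derive the one-step recursion. Writing the mini-batch gradient as $\bg_t = \frac{1}{m}\sum_{j=1}^m \nabla\tell_{i_t^{(j)}}(\bz_t)$, the chain rule gives $\nabla\ell_i(\bw)=A^T\nabla\tell_i(A\bw)$, so the update~(\ref{main-update-step}) reads $\bw_{t+1}=\bw_t-\eta A^T\bg_t$. Since $A^T\bg_t$ already lies in the row space, $\Pi A^T=A^T$, hence $\hbw_{t+1}=\hbw_t-\eta A^T\bg_t$ (the null-space component of $\bw_t$ is frozen and never affects $\cL$). Expanding $\|\hbw_{t+1}-\hbw^*\|^2$ and taking the conditional expectation over the i.i.d.\ mini-batch, the cross term becomes $\langle\bz_t-\bz^*,\nabla\tL(\bz_t)\rangle$, using $\ex{}{\bg_t}=\nabla\tL(\bz_t)$ and $\langle\hbw_t-\hbw^*,A^T\bg_t\rangle=\langle\bz_t-\bz^*,\bg_t\rangle$. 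Here I would use the sharp form of $\alpha$-strong convexity of $\tL$, $\langle\nabla\tL(\bz_t),\bz_t-\bz^*\rangle\ge \tL(\bz_t)+\frac{\alpha}{2}\|\bz_t-\bz^*\|^2$, together with $\|\bz_t-\bz^*\|^2=\|A(\hbw_t-\hbw^*)\|^2\ge\smin^2\|\hbw_t-\hbw^*\|^2$, to extract simultaneously a genuine contraction $-\eta\alpha\smin^2\|\hbw_t-\hbw^*\|^2$ and a ``free'' term $-2\eta\cL(\bw_t)$.

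The remaining term is the second moment $\eta^2\ex{}{\|A^T\bg_t\|^2}\le \eta^2\smax^2\ex{}{\|\bg_t\|^2}$, since $AA^T\preceq\smax^2 I$. I would bound $\ex{}{\|\bg_t\|^2}$ by the mini-batch variance decomposition already used in the proof of Theorem~\ref{thm:PL-SGD}, $\ex{}{\|\bg_t\|^2}=\frac1m\ex{i}{\|\nabla\tell_i(\bz_t)\|^2}+\frac{m-1}{m}\|\nabla\tL(\bz_t)\|^2$, then invoke $\beta$-smoothness of each $\tell_i$ with interpolation ($\tell_i\ge0$, $\tell_i(\bz^*)=0$) to get $\|\nabla\tell_i(\bz_t)\|^2\le 2\beta\tell_i(\bz_t)$, and $\lambda$-smoothness of $\tL$ to get $\|\nabla\tL(\bz_t)\|^2\le 2\lambda\tL(\bz_t)$, yielding $\ex{}{\|\bg_t\|^2}\le \frac{2(\beta+(m-1)\lambda)}{m}\cL(\bw_t)$. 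The decisive observation is that the coefficient of $\cL(\bw_t)$ in the recursion is then $-2\eta+2\eta^2\smax^2\frac{\beta+(m-1)\lambda}{m}$, which vanishes exactly at $\eta=\eta^*(m)=\frac{m}{\smax^2(\beta+(m-1)\lambda)}$. At this step size the $\cL(\bw_t)$ contributions cancel, leaving $\ex{}{\|\hbw_{t+1}-\hbw^*\|^2}\le(1-\alpha\smin^2\eta^*(m))\|\hbw_t-\hbw^*\|^2$; iterating and composing with the output estimate of the first paragraph gives the theorem.

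The main obstacle, and the reason this needs a different argument from Theorem~\ref{thm:PL-SGD}, is that the strong convexity lives in the $\re^k$ geometry of $\bz=A\bw$ while SGD evolves in $\re^d$ and $A$ may be rank-deficient; the pseudo-inverse / row-space projection is what quotients out the inert null-space directions, so that $\smin$ and $\smax$ transfer strong convexity and smoothness between the two spaces. The delicate quantitative point is orchestrating the two uses of strong convexity—one producing the $\smin^2$ contraction, one producing the $\cL(\bw_t)$ term that exactly absorbs the interpolation-driven second moment precisely at $\eta=\eta^*(m)$—with no step-size slack. (I also expect the right-hand side to carry $\|\hbw_0-\hbw^*\|^2$ rather than $\|\hbw_0-\hbw^*\|$, since the final smoothness conversion is quadratic in the distance.)
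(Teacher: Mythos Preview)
Your proposal is correct and follows essentially the same route as the paper: project onto $\Rng(\Adag)=\Null(A)^\perp$ via $\Adag A$, convert $\lambda$-smoothness of $\tL$ into the output bound $\cL(\bw_t)\le\frac{\lambda\smax^2}{2}\|\hbw_t-\hbw^*\|^2$, expand $\|\hbw_{t+1}-\hbw^*\|^2$, use $\alpha$-strong convexity of $\tL$ (the paper phrases this as $\alpha\smin^2$-strong convexity of $\cL$ on $\Rng(\Adag)$, which is the same statement pulled back through $A$), and then kill the $\cL(\bw_t)$ term via the mini-batch second-moment bound at $\eta=\eta^*(m)$ exactly as in \cite{MBB17}. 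Your parenthetical observations about the statement---that $\hbw_0,\hbw^*$ should be read as $\Adag A\bw_0,\Adag A\bw^*$ and that the right-hand side should carry $\|\hbw_0-\hbw^*\|^2$---match what the paper's own proof actually establishes.
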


\begin{proof}
Recall that we can express $A$ via SVD as $A=U\Sigma V^T$ where $U=[U_1 \ldots U_k]$ is the $k\times k$ matrix whose columns form an eigen basis for $AA^T$, $V=[V_1 \ldots V_d]$ is the $d\times d$ matrix whose columns form an eigen basis for $A^TA$, and $\Sigma$ is $k\times d$ matrix that contains the singular values of $A$; in particular $\Sigma_{ii}=\sigma_i$ and $\Sigma_{ij}=0$ for $i\neq j,~ 1\leq i\leq k, 1\leq j\leq d$, where $\sigma_i$ is the $ith$ singular value of $A$, $1\leq i\leq \min\{k, d\}$. Let $\smax\triangleq\sigma_1\geq \sigma_2\geq \ldots \geq \sigma_r\triangleq\smin$ be the non-zero singular values of $A$, where $r\leq \min\{k, d\}$. The following is a known fact: $\{U_1, \ldots, U_r\}$ is orthonormal basis for $\Rng(A)$ and $\{V_1, \ldots, V_r\}$ is orthonormal basis for $\Null(A)^{\perp}$, where $\Null(A)^{\perp}$ is the subspace orthogonal to $\Null(A)$. Also, recall that the Moore-Penrose inverse (pseudo-inverse) of $A$, denoted as $\Adag$ is given by $\Adag=V\Sigma^{\dagger}U^T$, where $\Sigma^{\dagger}$ where $\Sigma^{\dagger}_{ii}=\sigma_i^{-1},~ 1\leq i \leq r$, and the remaining entries are all zeros. The following is also a known fact that follows easily from the definition of $\Adag$ and the facts above: $\{V_1, \ldots, V_r\}$ is orthonormal basis for $\Rng(\Adag)$. Hence, from the above facts, it is easy to see that $\Rng(\Adag)=\Null(A)^{\perp}$. Thus, by the direct sum theorem, any $\bw\in \re^d$ can be uniquely expressed as sum of two orthogonal components $\hbw+\bar{\bw}$, where $\hbw\in\Rng(\Adag)$ and $\bbw\in\Null(A)$. In particular, $\hbw=\Adag A\bw$.  

Using these observations, we can make the following claim. 
\begin{claim}\label{claim:strng-conv-range}
$\cL$ is $\alpha\,\smin^2$-strongly convex over $\Rng(\Adag)$.
\end{claim}
The proof of the above claim is as follows. Fix any $\bz_1, ~\bz_2\in \Rng(\Adag)$. Observe that 
\begin{align}
    \cL(\bz_1)=\tL(A\bz_1)&\geq \tL(A\bz_2)+\left\langle \nabla \tL(A\bz_2), ~ A\left(\bz_1-\bz_2\right)\right\rangle+\frac{\alpha}{2}\left\|A\left(\bz_1-\bz_2\right)\right\|^2\label{ineq:tL-strng-conv}\\
    &=\cL(\bz_2)+\left\langle \nabla \cL(\bz_2), ~ \bz_1-\bz_2\right\rangle+\frac{\alpha}{2}\left\|A\left(\bz_1-\bz_2\right)\right\|^2\label{eq:cL}
\end{align}
where (\ref{ineq:tL-strng-conv}) follows from the strong convexity of $\tL$, and (\ref{eq:cL}) follows from the definition of $\cL$ and the fact that $\nabla \cL(\bz_2)=A^T \nabla \tL(A\bz_2)$. Now, we note that since $\bz_1, \bz_2 \in \Rng(\Adag)$, we have $\left\|A\left(\bz_1-\bz_2\right)\right\|^2 = \sum_{j=1}^r \sigma_j^2 \left\langle V_j, ~ \bz_1 - \bz_2~\right\rangle^2 \geq \smin^2 \left\|\bz_1-\bz_2\right\|^2$. Plugging this into (\ref{eq:cL}) proves the claim. 

We now proceed with the proof of the Theorem~\ref{thm:sgd-sc-lin}.
By $\lambda$-smoothness of $\tL$, we have 
\begin{align}
    \cL(\bw_{t+1})&=\tL(A\bw_{t+1})\leq \frac{\lambda}{2}\|A(\bw_{t+1}-\bw^*)\|^2\nonumber\\
    &=\frac{\lambda}{2}\|A(\hbw_{t+1}-\hbw^*)\|^2\leq \frac{\smax^2\lambda}{2}\|\hbw_{t+1}-\hbw^*\|^2.\label{bound_on_loss}
\end{align}
where, as above, $\hbw_{t+1}=\Adag A\bw_{t+1}$ is the projection of $\bw_{t+1}$ onto $\Rng(\Adag)$. Similarly, $\hbw^*$ is the projection of $\bw^*$ onto $\Rng(\Adag)$. Now, consider $\|\hbw_{t+1}-\hbw^*)\|^2$. From the update step (\ref{main-update-step}) of the mini-batch SGD and the linearity of the projection operator $\Adag A$, we have
\begin{align*}
    \|\hbw_{t+1}-\hbw^*\|^2 &=\|\hbw_t-\hbw^*)\|^2 -2\eta\, \left\langle \Adag A\cdot \frac{1}{m}\sum_{j=1}^m  \nabla \ell_{i_t^{(j)}}(\hbw_t)~,~ ~\hbw_t-\hbw^*\right\rangle+\eta^2 \left\|\Adag A\cdot\frac{1}{m}\sum_{j=1}^m  \nabla \ell_{i_t^{(j)}}(\hbw_t)\right\|^2\\
    &\leq \|\hbw_t-\hbw^*\|^2 -2\eta\, \left\langle  \frac{1}{m}\sum_{j=1}^m  \nabla \ell_{i_t^{(j)}}(\hbw_t), ~\hbw_t-\hbw^*\right\rangle+\eta^2 \left\|\frac{1}{m}\sum_{j=1}^m  \nabla \ell_{i_t^{(j)}}(\hbw_t)\right\|^2
\end{align*}
where the first equality follows from the update step and the fact that $\forall ~1\leq i\leq n, ~ \nabla\ell_i(\bw_t)=A^T\nabla\tell_i(A\bw_t) = A^T\nabla\tell_i(A\hbw_t)=\nabla\ell_i(\hbw_t)$. The last inequality follows from the fact that $\left(\mathbb{I}-\Adag A\right)\cdot\frac{1}{m}\sum_{j=1}^m  \nabla \ell_{i_t^{(j)}}(\hbw_t)$ is orthogonal to $\Rng(\Adag)$ (and hence orthogonal to $\hbw_t-\hbw^*$), and the fact that projection cannot increase the norm.
Fixing $\hbw_t$ and taking expectation with respect to the choice of the batch $i_t^{(1)}, \ldots, i_t^{(m)}$, we have 

\begin{align}
\ex{i_t^{(1)}, \ldots, i_t^{(m)}}{\|\hbw_{t+1}-\hbw^*\|^2}&\leq \|\hbw_t-\hbw^*\|^2 -2\eta\, \left\langle \nabla \cL(\hbw_t), ~\hbw_t-\hbw^*\right\rangle+\eta^2 ~~\ex{i_t^{(1)}, \ldots, i_t^{(m)}}{\left\|\frac{1}{m}\sum_{j=1}^m  \nabla \ell_{i_t^{(j)}}(\hbw_t)\right\|^2}\label{ineq:distance}
\end{align}
By Claim~\ref{claim:strng-conv-range}, we have 
\begin{align}
    \left\langle \nabla \cL(\hbw_t), ~\hbw_t-\hbw^*\right\rangle&\geq \cL(\hbw_t)+\frac{\alpha\,\smin^2}{2}\|\hbw_t-\hbw^*\|^2\label{ineq:strng-conv}
\end{align}
Hence, from (\ref{ineq:distance})-(\ref{ineq:strng-conv}), we have 
\begin{align*}
    \ex{i_t^{(1)}, \ldots, i_t^{(m)}}{\|\hbw_{t+1}-\hbw^*\|^2}&\leq \left(1-\eta\alpha\,\smin^2\right)~\ex{i_t^{(1)}, \ldots, i_t^{(m)}}{\|\hbw_t-\hbw^*\|^2}\\
    &\hspace{0.3cm}-2\eta \left(\cL(\hbw_t)-\frac{\eta}{2}~~\ex{i_t^{(1)}, \ldots, i_t^{(m)}}{\left\|\frac{1}{m}\sum_{j=1}^m  \nabla \ell_{i_t^{(j)}}(\hbw_t)\right\|^2}\right)
\end{align*}

As noted earlier $\forall~1\leq i \leq n,~ \ell_i$ is $\beta\smax^2$-smooth. Also, it is easy to see that$\cL$ is $\lambda\smax^2$-smooth. From this point onward, the proof follows the same lines of the proof of \cite[Theorem~1]{MBB17}. We thus can show that by choosing $\eta=\eta^*(m)=\frac{m}{\smax^2\left(\beta+(m-1)\lambda\right)}$, we get 
\begin{align*}
    \ex{i_t^{(1)}, \ldots, i_t^{(m)}}{\|\hbw_{t+1}-\hbw^*\|^2}&\leq \left(1-\eta^*(m)\alpha\,\smin^2\right)\ex{i_t^{(1)}, \ldots, i_t^{(m)}}{\|\hbw_t-\hbw^*\|^2}
\end{align*}
Using the above inequality together with (\ref{bound_on_loss}), we have
\begin{align*}
    \ex{\bw_{t+1}}\cL(\bw_{t+1})&\leq \frac{\smax^2\lambda}{2}\left(1-\eta^*(m)\alpha\,\smin^2\right)\ex{\bw_{t}}{\|\hbw_t-\hbw^*\|^2}\\
    &\leq \frac{\smax^2\lambda}{2}\left(1-\eta^*(m)\alpha\,\smin^2\right)^{t+1}\|\hbw_0-\hbw^*\|^2
\end{align*}
\end{proof}

\bibliography{sample}
\bibliographystyle{plain}
\end{document}